\tikzset{snake it/.style={decorate, decoration=snake}}
\def\Z{\mathcal{Z}}
\def\P{\mathbb{P}}
\def\h{\mathfrak{h}}
\def\Y{\mathcal{Y}}
\def\Z{\mathcal{Z}}
\def\UC{\mathscr{U\mspace{-2mu}C}}
\def\D{\mathbb{D}}
\def\hPNG{h^{\text{PNG}}}
\def\hL{h^{(L)}}
\DeclarePairedDelimiter\floor{\lfloor}{\rfloor}
\newtheorem{theorem}{Theorem}
\newtheorem{proposition}[theorem]{Proposition}
\theoremstyle{definition}
\title{Pushing, blocking and polynuclear growth}
\author{Will FitzGerald\thanks{Department of Mathematics, University of Manchester, Oxford Road,  
Manchester, M13 9PL, UK.
Email: william.fitzgerald@manchester.ac.uk}
}
\begin{document}
\maketitle

\abstract{We consider a discrete-time model for random interface growth which admits exact formulas and converges to the Polynuclear growth model in a particular limit. 
The height of the interface is initially flat and the evolution involves the addition of islands of height one according to a Poisson point process of nucleation events. The boundaries of these islands then spread in a stochastic manner, rather than at deterministic speed as in the Polynuclear growth model. The one-point distribution and multi-time distributions agree with point-to-line last passage percolation times in a geometric environment.
An alternative interpretation for the growth model can be given through interacting particle systems experiencing pushing and blocking interactions. }

\section{Introduction}
The Polynuclear growth model (PNG) is an extensively studied model for random interface growth in $(1+1)$-dimensions. 
The height of the interface at time $t$ is denoted by $x \rightarrow \hPNG_t(x).$ The interface grows according to nucleation events at which an island of height one is added to the interface. The left and right boundaries of these islands then spread at deterministic unit speed to the left and right respectively, and the islands merge on contact. 
All the randomness in the process is contained within the nucleation events
which occur according to a Poisson point process.  

The choice of initial condition plays an important role in the analysis of PNG. The simplest case to study is the narrow-wedge or droplet initial condition. This can be defined by setting $\hPNG_0(x) \equiv 0$ and specifying that the nucleation events occur according to a Poisson point process in the following subset of the space-time plane $\{(t, x) : \lvert x \rvert \leq t, t \geq 0\} \subset \mathbb{R}_{\geq 0} \times \mathbb{R}$. An interpretation is that there is an island spreading at unit speed from the origin at time $0$ and nucleations are only permitted on top of this island. PNG started from the droplet initial condition is connected to a Poissonised version of the longest increasing subsequence in a random permutation. 
This discovery led to exact-formulas and an understanding of the one-point \cite{BDJ} and multi-point distributions \cite{PS}.

The initial condition that is most relevant to our case is flat initial data. 
We set $\hPNG_0 \equiv 0$ and specify instead that nucleation events occur according to a Poisson point process of rate $2$ in the whole space-time plane $\mathbb{R}_{\geq 0} \times \mathbb{R}.$ We refer to this as \emph{flat PNG}. The height function in flat PNG is given by a point-to-line last passage percolation time in a Poissonian environment. Point-to-line last passage percolation 
can be related to point-to-point last passage percolation in a symmetric environment. 
As a result, flat PNG retains a connection
to a Poissonised version of a longest increasing subsequence in a random permutation but with an additional symmetry imposed and this
led to an understanding of the one-point distribution
in \cite{BR2, BR1}. This was extended to multi-point distributions in \cite{BFS} by identifying flat PNG as a particular limit related to 
discrete-time TASEP started from a periodic initial condition.
More recently exact formulas have been found for PNG with general initial data that have uncovered more general connections to Toda lattice equations \cite{MQR}. 

One aspect that is less well developed for PNG is how it can be viewed as one representative of a larger class of exactly solvable growth models. The other prototypical exactly solvable growth model, the height function of TASEP, is one example of a much larger class given by the height functions of ASEP, $q$-TASEP, PushASEP and many other interacting particle systems. 
The interpretation of PNG through last passage percolation in a Poisson environment can be viewed as an example within a larger class of exactly solvable models for directed polymers in random environment. A zero temperature example, last passage percolation with geometric data, can be viewed as a growth model called discrete PNG. 
However, the positive temperature models such as the log gamma polymer no longer retain a direct interpretation
as growth models. 
A different type of solvable deformation of PNG was recently introduced in \cite{ABW} where two islands that merge generate an island of height one at the merging location with probability $t$.

The main result of this article is to construct a new type of random interface growth model that converges to flat PNG in a particular limit and
before taking the limit preserves some of the exact solvability of flat PNG. 
In particular, reformulating the results of \cite{F} give that the one-point and multi-time distributions of this growth model agree with point-to-line last passage percolation in a geometric environment. 

We give three different descriptions of this growth process. 
The height of the interface at time $t$ is denoted by $x \rightarrow h_t(x)$ and starts from $h_0 \equiv 0$.
The description that is closest to PNG is 
that the height of the interface grows by the addition of islands of height $1$ according to a Poisson point process of nucleation events (on a discretised space-time plane) but with the difference that the boundaries of these islands now spread in a stochastic manner. The idea can be seen in Figure \ref{fig_height_function} and the definition is given in Section \ref{growth_defn2}.

A second description of the growth processes involves pushing and blocking interactions. 
A brief informal description is given here and a detailed description in Section \ref{growth_defn}. 
Given a height function 
$x \rightarrow h_t(x)$
we define the interface
$x \rightarrow h_{t+1}(x)$ by viewing the space variable $x$ in the growth process as a time variable for a Markov process. Then $h_{t+1}$ is given by the path of a continuous-time random walk jumping up with rate $v$ and down with rate $v^{-1}$ while being pushed and blocked by the path $h_t$ in order to maintain $h_{t+1}(x) \geq h_t(x)$ for all $x \in \mathbb{R}$. At this point we could simply repeat this procedure to define $h_{t+2}$ and so on. 
However, the resulting interface growth process would then lack spatial symmetry. 
We instead define $h_{t+2}$  by repeating the same process started from $h_{t+1}$ but
now viewing $(-x)$ as the time variable for a Markov process. 
It is a non-trivial fact that this does recover spatial symmetry. 

The third and final description of the growth process considered in this paper is as a particular marginal of a multi-dimensional interacting process involving pushing and blocking that was constructed in \cite{F}. This type of process was first 
considered in a Brownian setting \cite{FW} where the motivation was that marginals of a multi-dimensional process could be used to 
prove several distributional identities, e.g.~relating the point-to-line partitions function of the O'Connell Yor polymer and the log gamma polymer introduced by Sepp\"{a}l\"{a}inen. 
This provides a different context in which to consider the multi-dimensional interacting processes in \cite{F, FW}. The (tautological) identity that the height function in flat PNG is given by point-to-line last passage percolation in a Poissonian environment can be generalised in a number of ways. On the one hand, the Poissonian environment can be replaced by 
geometric/exponential environment or, in a positive temperature setting, by partition functions of the log gamma polymer. On the other hand, flat PNG 
can be replaced by other types of growth process involving push-block dynamics or reflecting Brownian motions.  
The multi-dimensional interacting processes in \cite{F, FW} then encode distributional identities between these 
different models. This is explained for one representative example in Section \ref{sec:lpp_identities}.  

In Section \ref{results} we define the growth model considered in this paper and state our main results. 
In Section \ref{growth_defn2} we interpret the points of increase and decrease of the height function as an interacting particle system. In Section \ref{sec:proof_thm} we use this viewpoint to prove convergence to PNG. In Section \ref{sec:lpp_identities} we explain identities between the growth model considered in this paper and point-to-line last passage percolation times. 

\paragraph{Acknowledgements.}
 The author is supported as part of a Leverhulme Trust Research Project Grant RPG-2021-105.

\section{Statement of Results}
\label{results}

\subsection{Definition of the growth model}
\label{growth_defn}

Define a discrete-time Markov process $(h_t)_{t \in \mathbb{Z}_{\geq 0}}$ taking values in the state space of upper semi-continuous functions from $\mathbb{R}$ into
$\mathbb{Z}_{\geq 0}$ denoted by $\UC$. The hypograph of a function $f$ is given by 
$\text{hypo}(f) = \{(x, y) : y \leq f(x)\}$. We equip $\UC$ with the topology of local Hausdorff convergence of hypographs. 
This is the topology considered in \cite{MQR} with the mild simplification that our height functions take values in $\mathbb{Z}_{\geq 0}$. 
The process starts from $h_0 \equiv 0$. 

Let $L > 0$ and define first an approximation $(\hL_t)_{t \in \mathbb{Z}_{\geq 0}}$ of $(h_t)_{t \in \mathbb{Z}_{\geq 0}}$. 
For $t \geq 1$ given $\hL_{t - 1}$ we construct $\hL_t$ in the following way which depends on whether $t$ is even or odd. 
If $t$ is odd, then consider the c\`{a}dl\`{a}g modification of $\hL_{t -1}$. 
Define $\xi(u) = \hL_{t-1}(u)$ for all $u \leq -L$ and a continuous-time Markov process $(\xi(u))_{u \geq -L}$ started
from $\xi(-L) = \hL_{t-1}(-L)$ taking values in $\mathbb{Z}_{\geq 0}$. The dynamics are not homogeneous in time
and are given as follows.
\begin{itemize}
\item $\xi$ increases by $1$ at rate $v$ whenever $u \in [-L, L]$,
\item $\xi$ decreases by $1$ at rate $v^{-1}$ whenever $\xi(u) > \hL_{t - 1}(u)$,
\item If $\hL_{t - 1}(u) = \hL_{t - 1}(u_{-}) + 1$ and $\hL_{t - 1}(u_{-}) = \xi(u_{-})$ then $\xi(u) = \xi(u_{-}) + 1$. 
\end{itemize}
We then define $h_t$ to be the upper semi-continuous modification of $\xi$. 

If $t$ is even, we repeat the same process but with time reversed. We consider the c\`{a}dl\`{a}g modification of 
$(h_{t - 1}(-u))_{u \in \mathbb{R}}$. Then define $\xi(u) = h_{t-1}(-u)$ for all $u \leq -L$ and a continuous-time Markov process $(\xi(u))_{u \geq -L}$ started
from $\xi(-L) = \hL_{t-1}(L)$ taking values in $\mathbb{Z}_{\geq 0}$. 
The dynamics are that:
\begin{itemize}
\item $\xi$ increases by $1$ at rate $v$ whenever $u \in [-L, L]$,
\item  $\xi$ decreases by $1$ at rate $v^{-1}$ whenever $\xi(u) > \hL_{t - 1}(-u)$,
\item If $\hL_{t - 1}(-u) = \hL_{t - 1}((-u)_{-}) + 1$ and $\hL_{t - 1}((-u)_{-}) = \xi(u_{-})$ then $\xi(u) = \xi(u_{-}) + 1$. 
\end{itemize}
Informally, $(\xi(u))_{u \geq -L}$ is a continuous-time random walk that experiences pushing and blocking interactions to maintain that $\xi(u) \geq \hL_{t - 1}(-u)$ for all $u \in \mathbb{R}$. 
We then set $\hL_t$ to be the upper semi-continuous modification of $(\xi(-u))_{u \in \mathbb{R}}$.

For any $n \geq 1$, as $L \rightarrow \infty$ then $(\hL_t)_{t \in \{0, 1, \ldots, n\}}$ converges weakly to a limit
$(h_t)_{t \in \{0, 1, \ldots, n\}}$ as processes on $\UC$. The convergence to a limit in $L$ is easiest to see from the formulation of the growth process in Section \ref{growth_defn2}. It is also possible to define the dynamics above directly in a stationary regime, see Section \ref{sec:lpp_identities}. 

We will use $h^{v}_t$ 
to specify the dependency on $v$.

\subsection{Last passage percolation}
We consider point-to-line last passage percolation in a geometric environment. 
Let $(g_{ij} : i, j \in \mathbb{Z}_{\geq 1}, i+j \leq 2n+1)$ be an independent collection of geometric random variables with parameter $1-v^2$.
(We use the convention that $\P(g_{ij} = k)) = (1-v^2)v^{2k}$ for all $k \geq 0$.)  
Let $\Pi_n^{\text{flat}}(k, l)$ denote the set of all directed up-right nearest neighbour paths from the point 
$(k, l)$ to the line $\{(i, j) : i + j = 2n +1\}$. 
For all $k, l \in \mathbb{Z}_{\geq 1}$ with $k + l \leq 2n+1$ let 
\[
G(k, l) = \max_{\pi \in \Pi_n^{\text{flat}}} \sum_{(i, j) \in \pi} g_{ij}.
\]
The family $(G(k, l) : k,  l \in \mathbb{Z}_{\geq 1}, k+l \leq 2n+1)$ gives a collection of point-to-line last passage percolation times where the line is fixed and the point $(k, l)$ from which the paths originate varies. 

\subsection{Main result}

The processes $(h^{v}_{\floor{2n t}})_{t \in [0, 1]}$ and $(h^{\text{PNG}}_t)_{t \in [0, 1]}$ are
c\`{a}dl\`{a}g processes with the Skorokhod topology and the underlying state space given by $\UC$. 
We denote the state space of the processes by $\D$.

\begin{theorem}
\label{main_thm}
\begin{enumerate}[(i)]
\item
Let $v_n = 1/n$, then as $n \rightarrow \infty$
\[
(h_{\floor{2n t}}^{v_n})_{t \in [0, 1]} \rightarrow (h^{\text{PNG}}_t)_{t \in [0, 1]}
\]
in the sense of weak convergence on $\D$.
\item For any $v > 0$ and any integer $n \geq 1$,
\begin{align*}
& (h^{v}(1, 0), h^{v}(2, 0), \ldots, h^{v}(2n, 0)) \\
& \stackrel{d}{=} 
(G(n+1, n), G(n, n), G(n, n-1), \ldots, G(1, 1)) .
\end{align*} 
\end{enumerate}
\end{theorem}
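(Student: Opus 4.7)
The two parts of Theorem \ref{main_thm} call for quite different methods. For part (i), I would exploit the particle-system representation of Section \ref{growth_defn2}, which makes the model amenable to direct scaling analysis. With $v_n = 1/n$ and PNG-time $s = t/(2n)$, the dynamics split into two effects. First, the nucleation intensity: up-jumps occur at rate $v_n = 1/n$ per unit length per iteration, and $\lfloor 2ns\rfloor$ iterations fit into PNG-time $s$, so the space-time nucleation process converges to the rate-$2$ Poisson process of flat PNG. Second, the spreading: once a bump of height one is created at iteration $t_0$ and position $x_0$, each subsequent iteration sends a walker that is pushed up by the bump and must then descend at rate $v_n^{-1} = n$, extending the bump on the direction-of-motion side by an $\mathrm{Exp}(n)$ amount with mean $1/n$. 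Since iterations alternate direction, after $\lfloor 2n(s - t_0/(2n))\rfloor$ steps the right (respectively left) boundary is the sum of $\approx n(s - t_0/(2n))$ independent $\mathrm{Exp}(n)$ random variables, which concentrates around $s - t_0/(2n)$ by the LLN --- exactly the unit-speed spreading of flat PNG. Mergers of islands and stacked nucleations creating heights $\geq 2$ are handled analogously. To promote this heuristic to weak convergence on $\D$, I would establish tightness by comparison with a dominating uncoupled process, and then extract the finite-dimensional limits from the Poisson convergence and the LLN above.

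For part (ii) I would follow the ``third description'' of the growth model as a marginal of the multi-dimensional push-block process of \cite{F}; the introduction signals that this identity is a reformulation of results there. The plan has three steps: (a) verify that $(h^v_t)_{t \in \mathbb{Z}_{\geq 0}}$ is precisely the marginal of \cite{F}'s construction that is coupled to point-to-line geometric LPP; (b) invoke the distributional identity from \cite{F} on that marginal; and (c) match the bookkeeping so that the growth-time index $t \in \{1, \ldots, 2n\}$ pairs with the starting points $(n+1, n), (n, n), (n, n-1), \ldots, (1, 1)$. A useful sanity check is the case $t = 1$: here $h^v(1, 0)$ is the value at $0$ of a random walk with up-rate $v$ and down-rate $v^{-1}$ reflected at zero and started from $-\infty$, whose stationary law (an $M/M/1$ queue with load $v^2$) is geometric with parameter $1 - v^2$, matching $G(n+1, n) = g_{n+1, n}$.

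The main technical obstacle for part (i) is to boost the LLN for boundary positions to uniform control strong enough for local Hausdorff convergence of hypographs, which is the topology underlying $\D$; since small spurious bubbles can spoil such convergence, one likely needs a union bound over the $O(n)$ islands in a bounded window together with large-deviation bounds on the exponential sums governing boundary positions. The main obstacle for part (ii) is the bookkeeping of step (c): checking that the spatial reversal on even iterations and the shape of the LPP cone $\{(i, j) : i + j \leq 2n+1\}$ fit together to produce exactly the claimed sequence of $(k, \ell)$-coordinates.
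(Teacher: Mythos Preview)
Your plan is correct and follows the paper's approach closely: both parts rely on exactly the ingredients you identify (the particle-system reformulation plus LLN/large-deviation control for part (i), and the embedding into the array of \cite{F} for part (ii)). Two remarks on execution for part (i). First, the paper bypasses your tightness-plus-FDD scheme by \emph{coupling} the nucleation processes: it starts from the rate-$2$ Poisson process $M$ driving PNG and discretises its time coordinate to the grid $\{j/2n\}$ to obtain the nucleations for $h^{v_n}$, so that the particle paths $(Y,Z)$ converge almost surely (not just in law) and the result follows from the continuous mapping theorem. This is shorter and avoids a separate tightness argument. Second, the paper isolates one phenomenon you only allude to implicitly: the \emph{pushing} interactions between same-type particles (a $\mathcal Z$-particle catching the $\mathcal Z$-particle ahead of it) are themselves large-deviation events for the pair of walks, and Borel--Cantelli shows they simply do not occur for large $n$ on compacts; once these are ruled out, each boundary is an uncoupled sum of i.i.d.\ $\mathrm{Exp}(n)$ variables and the SLLN applies directly. (Minor correction: in a bounded space-time window there are $O(1)$ nucleations, not $O(n)$; it is the number of \emph{summands} per boundary that is $O(n)$.) For part (ii) your three-step plan is exactly what the paper does via Proposition~\ref{prop:height_array}.
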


\section{Alternative representation of the growth process}
\label{growth_defn2}

A useful method for studying PNG, eg.~\cite{Ferrari}, is to encode the height interface in terms of two types of particles which may be referred to as points of increase/decrease, up/down-steps or antikinks/kinks.
The first step in proving Theorem \ref{main_thm} is a similar reformulation of the growth process described in Section \ref{growth_defn}.
The evolution of the height interface then corresponds to an evolution of an associated particle system. 
The advantage is that it will be possible to deduce the convergence in Theorem \ref{main_thm} from convergence of the particle system.

We define a discrete-time particle system $((\Y_t, \Z_t) : t \in \mathbb{Z}_{\geq 0})$
formed of two types of particles. We list the positions of surviving particles at time $t$ as $\Y_t = (\Y_t^1, \Y_t^2, \ldots)$ ordered as $\Y_t^1 < \Y_t^2 < \ldots$ (resp. $\Z_t = (\Z_t^1, \Z_t^2, \ldots)$ ordered as 
$\Z_t^1 < \Z_t^2 < \ldots$).  At a fixed time, the state space for each particle system is the set of locally finite 
point measures with the topology of vague convergence. 
The particles in $\Y_t$ will correspond to points of increase in a height interface and the particles in $\Z_t$ will correspond to points of decrease.   

Let $(\mathcal{N}_{t} : t = 1, 2, \ldots, 2n)$ be an independent collection of 
Poisson point processes of rate $v$ on $\{t \} \times [-L, L]$.
Let $(\zeta_t^j : j \in \mathbb{Z}_{\geq 1}, t = 1, 2, \ldots, 2n)$ be an independent collection of exponential random variables with rate $v^{-1}$. We will take $L \rightarrow \infty$ at the end of the construction.

The evolution of particle positions from time $t - 1$ to $t$ depends on whether $t$ is even or odd. 
If $t$ is odd we apply the following steps: 
\begin{itemize}
\item Nucleation points occur according to $\mathcal{N}_t$.  A nucleation point at $(t, x)$ involves the addition of a 
point of increase at position $x_{-}$ at time $t_+$ and an addition of a point of decrease at position $x_+$ at time $t_{-}$. In words the point of decrease is immediately added while the point of increase is only added once the following process has completed. 
\item The particles in $\Y$ are fixed while the particles in $\Z$ make jumps to the right. The evolution of particles proceeds sequentially from the left. Let $k_1 = \inf\{l \geq 1: \Y_t^l > \Z_{t-1}^1\}$ with the convention that the infimum is infinity if taken over the empty set and $\Y^{\infty}:=\infty$. 
If \[
\Z_{t- 1}^1 + \zeta_{t}^1 \geq \Y_t^{k_1}\]
 then there is an instantaneous pairwise annihilation and the particles
$\Z^1$ and $\Y^{k_1}$ are removed. 
Otherwise we set
\[\Z_{t}^1 = \Z_{t- 1}^1 + \zeta_{t}^1. 
\]
For $j \geq 2$ let $k_j = \inf\{l \geq 1: \Y_t^l > \Z_{t-1}^j\}$. Before taking a jump $\Z^j$ is pushed 
to $\max(\Z_{t- 1}^j, \Z_{t}^{j-1})$. 
If \[
\max(\Z_{t- 1}^j, \Z_{t}^{j-1}) + \zeta_{t}^j \geq \Y_t^{k_j}
\] then there is an instantaneous pairwise annihilation of the particles $\Z^j$ and $\Y^{k_j}$. 
Otherwise we set
\[
\Z_t^j = \max(\Z_{t- 1}^j, \Z_{t}^{j-1}) + \zeta_t^j.
\]
\end{itemize}
If $t$ is even then the evolution is the analogue of the above with the particles in $\Z$ fixed and the particle in $\Y$ making jumps to the left. Suppose that there are $m$ particles of type $1$ and $m$ particle of type $2$. 
\begin{itemize}
\item Nucleation points occur according to $\mathcal{N}_t$. A nucleation point at $(t, x)$ involves the addition of a point of increase at position $x_{-}$ at time $t_{-}$ and an addition of a point of decrease in position $x_+$ in time $t_{+}$. 
\item The evolution of particles occurs sequentially in decreasing order $\Y_t^m, \ldots, \Y_t^1$. Let $k_m = \sup\{l \geq 1: \Z_{t-1}^l < \Y_t^m\}$ with the convention that the supremum is negative infinity if taken over the empty set and 
$\Z^{-\infty}:=-\infty$. 
If \[
\Y_{t- 1}^{m} - \zeta_{t}^m \leq \Z_t^{k_m}\] then there is an instantaneous pairwise annihilation of the particles $\Y^m$ and $\Z^{k_m}$. 
Otherwise we set
\[\Y_{t}^m = \Y_{t- 1}^m - \zeta_{t}^m. 
\]
For $1 \leq j \leq m-1$ let $k_j = \sup\{l \geq 1: \Z_{t-1}^l < \Y_t^j\}$. Before taking a jump $\Y_t^j$ is pushed 
to $\min(\Y_{t- 1}^{j}, \Y_{t}^{j+1})$. 
If \[
\min(\Y_{t- 1}^{j}, \Y_{t}^{j+1}) - \zeta_{t}^j \leq \Z_t^{k_j}\]
 then there is an instantaneous pairwise annihilation of the particles $\Y^j$ and $\Z^{k_j}$. 
Otherwise we set
\[
\Y_t^j = \min(\Y_{t- 1}^{j}, \Y_{t}^{j+1}) - \zeta_{t}^j.
\]
\end{itemize}

This particle system can now be used to define a height function. 
We draw the paths of the process $(\Y_{t}, \Z_{t})_{t \in [0, 2n]}$ in the space-time plane, see Figure \ref{fig_height_function}. 
Define a height function $(\h_{t}^{(L)}(x))_{t \in [0, 2n], x \in \mathbb{R}}$ to be the number of paths of the process $(\Y_t, \Z_t)_{t \in [0, 2n]}$ that are crossed by the straight line from $(t, x)$ to $(0, x)$ as shown in Figure \ref{fig_height_function}.
As $L \rightarrow \infty$,
\[
(\h_t^{(L)}(x))_{t \in [0, 2n], x \in \mathbb{R}} \rightarrow (\h_t(x))_{t \in [0, 2n], x \in \mathbb{R}}
\]
weakly as processes on $\UC$. 

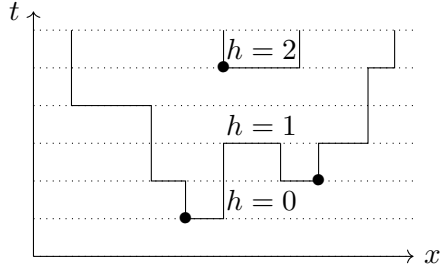
\begin{figure}
\centering
\begin{tikzpicture}[scale = 0.5]
\draw[->](0, 0) -- (0, 6.5);
\draw[->] (0, 0) -- (10, 0);
\node at (10.5, 0){$x$};
\node at (-0.5, 6.5){$t$};
\draw[dotted] (0, 0) -- (10, 0); 
\draw[dotted] (0, 1) -- (10, 1); 
\draw[dotted] (0, 2) -- (10, 2); 
\draw[dotted] (0, 3) -- (10, 3); 
\draw[dotted] (0, 4) -- (10, 4); 
\draw[dotted] (0, 5) -- (10, 5); 
\draw[dotted] (0, 6) -- (10, 6); 
\node at (4, 1){\textbullet};
\draw(4, 1) -- (5, 1)--(5, 3) -- (6.5, 3);
\draw (4, 1) -- (4, 2) -- (3.1, 2) -- (3.1, 4) -- (1, 4) -- (1, 6); 
\node at (7.5, 2){\textbullet};
\draw (7.5, 2) -- (6.5, 2) -- (6.5, 3);
\draw(7.5, 2) -- (7.5, 3) -- (8.8, 3) -- (8.8, 5) -- (9.5, 5) -- (9.5, 6);
\node at (5, 5){\textbullet};
\draw (5, 5) -- (7, 5) -- (7, 6);
\draw(5, 5) -- (5, 6);
\node at (6, 1.5) {$h = 0$};
\node at (6, 3.5){$h = 1$};
\node at (6, 5.5){$h = 2$};
\end{tikzpicture}
\caption{Defining the height function from the paths of $(\Y_t, \Z_t)_{t \in [0, 2n]}$.}
\label{fig_height_function}
\end{figure}

\begin{proposition}
$(h_t(x))_{t \in [0, 2n], x \in \mathbb{R}} \stackrel{d}{=} (\h_t(x))_{t \in [0, 2n], x \in \mathbb{R}}.$
\end{proposition}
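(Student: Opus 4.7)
The proof proceeds by induction on $t \in \{0, 1, \ldots, 2n\}$, coupling the two constructions on a single probability space via the common nucleation Poisson processes $\mathcal{N}_t$ and exponentials $(\zeta_t^j)$. The base case $t = 0$ is trivial, and the even step is the mirror image of the odd step under $u \mapsto -u$, so I focus on odd $t$ under the inductive hypothesis $\hL_{t-1} = \h_{t-1}^{(L)}$ almost surely in $\UC$, with up-steps at the $\Y_{t-1}$-positions and down-steps at the $\Z_{t-1}$-positions.

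Couple the Markov process $\xi(u)$ of Section \ref{growth_defn} by identifying its rate-$v$ spontaneous up-jumps with the atoms of $\mathcal{N}_t$ and by feeding $(\zeta_t^j)$ into its rate-$v^{-1}$ down-jump clock in the order in which the $\Z$-particles are updated by the algorithm of Section \ref{growth_defn2}. Then trace $\xi$ left to right and verify sequentially: a nucleation at $(t,x) \in \mathcal{N}_t$ produces an up-jump of $\xi$ at $x$, matching the creation of a $\Y_t$ at $x_-$ and of a $\Z$ at $x_+$ that immediately begins jumping rightward; at each $\Z_{t-1}^j$ the profile $\hL_{t-1}$ drops so the gap $g(u) := \xi(u) - \hL_{t-1}(u)$ rises by one, launching a fresh $\mathrm{Exp}(v^{-1})$ clock identified with $\zeta_t^j$; at each $\Y_{t-1}^k$ either $g = 0$ and the forced-up-jump rule preserves $\Y_{t-1}^k$ as a $\Y_t$, or $g \geq 1$ and the forced rule does not fire so $g$ drops by one, exactly the annihilation of the oldest active $\Z$ with $\Y^{k_j}$; each down-jump of $\xi$ creates a new down-step of $\hL_t = \xi$ at position $\max(\Z_{t-1}^j, \Z_t^{j-1}) + \zeta_t^j$, matching the landing of the $j$-th $\Z$-particle. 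These four items identify the up-step and down-step sets of $\hL_t$ with $\Y_t$ and $\Z_t$ respectively, so $\hL_t = \h_t^{(L)}$ almost surely; letting $L \to \infty$ transfers the equality in distribution to the full processes via the weak-convergence statements in Sections \ref{growth_defn} and \ref{growth_defn2}.

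The main technical obstacle is the regime in which the gap $g$ exceeds one, arising for instance from a nucleation landing above an already-elevated region of $\xi$ or from two consecutive $\Z_{t-1}$-particles without an intervening $\Y_{t-1}$. Several $\Z$-particles are then simultaneously active and it is not \emph{a priori} clear that the leftmost-first scheduling together with the push rule $\max(\Z_{t-1}^j, \Z_t^{j-1})$ of Section \ref{growth_defn2} reproduces the joint law of the successive down-jumps of $\xi$. The resolution is exponential memorylessness: the total rate of down-jumps of $\xi$ equals $v^{-1}$ independently of the value of $g$, so on any interval on which a given set of $\Z$-particles is active, the down-jumps of $\xi$ form a rate-$v^{-1}$ Poisson process; attributing each landing to the leftmost still-active $\Z$ and applying the push correction yields exactly the joint distribution of $(\Z_t^j)$ prescribed by the algorithm, which closes the inductive step.
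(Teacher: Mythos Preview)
Your argument is correct and follows essentially the same route as the paper's proof: both trace the auxiliary process $\xi$ from left to right, identify its up- and down-steps with the particle sets $(\Y_t,\Z_t)$, and match the four events (nucleation, passage of a $\Z_{t-1}$, passage of a $\Y_{t-1}$, down-jump of $\xi$) with the corresponding moves of the particle system. Your explicit appeal to exponential memorylessness to handle the regime $g\geq 2$ is exactly what the paper encodes in the sentence ``the next exponential clock can only begin once the present one has finished''; you have simply made that step, and the coupling underlying it, more explicit.
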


\begin{proof}
Given a c\`{a}dl\`{a}g modification of $\hL_t$ we define 
\begin{align*}
Y_t^1 & = \inf\{x \in \mathbb{R}: \hL_t(x) = \hL_t(x_{-}) + 1\} \\
Y_t^j & =  \inf\{x > Y_t^{j-1}: \hL_t(x) = \hL_t(x_{-}) + 1\}, \qquad j \geq 2.
\end{align*}
We continue defining particle positions $(Y_t^1, \ldots, Y_t^m)$ until the infimum is taken over the empty set.
We define
\begin{align*}
Z_t^1 & = \inf\{x \in \mathbb{R}: \hL_t(x) = \hL_t(x_{-}) - 1\} \\
Z_t^j & =  \inf\{x > Z_t^{j-1}: \hL_t(x) = \hL_t(x_{-}) - 1\}, \qquad j \geq 2.
\end{align*}
The processes $(Y_t^1, \ldots, Y_t^m)$ and $(Z_t^1, \ldots, Z_t^m)$ 
give the points of increase and decrease in the height function $\hL_t$.
We show that the evolution of $(\Y_t^1, \ldots, \Y_t^m)$ and $(\Z_t^1, \ldots, \Z_t^m)$ coincides with the evolution of 
$(Y_t^1, \ldots, Y_t^m)$ and $(Z_t^1, \ldots, Z_t^m)$.

Suppose we have a height function $\hL_{t-1}$. 
Then $\hL_t$ is constructed from $\hL_{t-1}$ 
by reflecting a continuous-time random walk $\xi$ from the lower barrier $\hL_{t-1}$
as described in Section \ref{growth_defn}.
Suppose that $t$ is odd.
We consider the points of increase in the process $\xi$ as $(U_{t}^1, \ldots, U_{t}^p)$. 
This will necessitate $p$ extra points of potential decrease in the process $\xi$ which we view as initially having positions 
$(D_{t-}^1 = U_{t}^1, \ldots, D_{t-}^p = U_{t}^p)$ but their locations will be updated in the following. 
We now proceed to consider 
\[
(Y_{t-1}^1, \ldots, Y_{t-1}^m, Z_{t-1}^1, \ldots, Z_{t-1}^m, D_{t-}^1, \ldots, D_{t-}^p)
\] in increasing order. 
The points of increase $(Y_{t-1}^1, \ldots, Y_{t-1}^m)$ remain in the same position
except that there is a possibility that they could be annihilated by the motion of the points of decrease. 
The process $\xi$ does not immediately decreases when $h_{t-1}$ decreases or when $\xi$ has increased. 
Instead exponential clocks for the process $\xi$ to perform a down jump begin at
 the positions $(Z_{t-1}^1, \ldots, Z_{t-1}^m, D_{t-}^1, \ldots, D_{t-}^p).$
 If $\xi$ has not decreased before the next point of increase in $h_{t-1}$ then the two associated positions are removed from $Y$ and $(Z, D)$ respectively. This corresponds to the the pairwise annihilation in $(\Y, \Z).$ 
If an exponential clock that begins at some position say $Z_{t-1}^j$
exceeds a later position $(Z_{t-1}^{j+1}, \ldots, Z_{t-1}^m, D_{t-}^1, \ldots, D_{t-}^p)$ then the next exponential clock can only begin once the present one has finished. This follows from the dynamics of $\xi$ and corresponds to the pushing interaction in the process $\Y$. 
As a result, the evolution of $(Y_t^1, \ldots, Y_t^m)$ and $(Z_t^1, \ldots, Z_t^m)$ coincides with the evolution of 
$(\Y_t^1, \ldots, \Y_t^m)$ and $(\Z_t^1, \ldots, \Z_t^m)$ whenever $t$ is odd.
If $t$ is even 
then the same argument holds with the role of $Y$ and $Z$ interchanged. 
 
We have shown the points of increase and decrease in $\hL$ and $\mathfrak{h}^{(L)}$ coincide as processes in time. Therefore $\hL$ and $\mathfrak{h}^{(L)}$ differ by a constant which must be zero since $\hL(u) \rightarrow 0$ and $\h^{(L)}(u) \rightarrow 0.$ Hence $\hL = \mathfrak{h}^{(L)}$. The proof is completed by taking a limit as $L \rightarrow \infty$. 
\end{proof}

\section{Proof of Convergence to PNG}
\label{sec:proof_thm}

\begin{proof}
The polynuclear growth model is driven by a Poisson point processes of rate $2$ on $[0, 1] \times \mathbb{R}$.
Denote this Poisson point process by $M$.
For each point $(s, x) \in M$ consider $(s^{[n]}, x)$ where $s^{[n]} = \inf\{j/2n : j/2n \geq s, j \in \mathbb{Z}_{\geq 1}\}$. 
Then $M^{[n]} = \{ (s^{[n]}, x) : (s, x) \in M\}$ 
is a collection of $2n$ independent Poisson point processes of rate $1/n$ on $\{\frac{j}{2n}\} \times \mathbb{R}$ for $j = 1, 2, \ldots, 2n$. 
We use $\{ (2n s^{[n]}, x) : (s^{[n]}, x) \in M^{[n]}\}$ as the nucleation points for the growth process in Section \ref{growth_defn2} or equivalently the up-jumps
of the auxiliary process $\xi$
in Section \ref{growth_defn}. 

Set $v_n = 1/n$. Consider the rescaled height function $(h_{\floor{2n t}})_{t \in [0, 1]}$ and 
the associated processes $(Y_{\floor{2n t}}, Z_{\floor{2n t}})_{t \in [0, 1]}.$
(Note it is possible to take the previous limits $L \rightarrow \infty$ uniformly in $n, v_n = 1/n$.)
Consider the evolution of a point of decrease $(R_t^n)_{t \geq s^{[n]}}$ emanating from a nucleation point $(s^{[n]}, x)$. 
Let $R_t = x + t - s$ for all $t \geq s$. 
We extend both processes to $0 \leq t \leq 1$ by assigning $(R_t^n)_{t \geq s^{[n]}}$ and $(R_t)_{t \geq s}$ to be in a cemetery state 
for $0 \leq t < s^{[n]}$ and $0 \leq t < s$.
In the absence of interactions $(R_t^n)_{t \geq 0}$ converges to  $(R_t)_{t \geq 0}$ in the Skorokhod topology.
This holds since the evolution of $(R_t^n)_{t \geq 0}$ is governed by an independent collection $(\zeta_k)_{k \geq 1}$ of exponential random variables with rate $v^{-1}_n = n$. It is convenient to rescale to $\zeta^*_k:= n \zeta_k$ which are exponential with rate $1$. 
For all $t \geq s^{[n]}$, by the strong law of large numbers,
\begin{align*}
R_t^n & = x+ \frac{1}{n} \sum_{j = 2n s^{[n]}: j \in 2\mathbb{Z}+1}^{\floor{2n t}} X_j^* \\
& \rightarrow x + t - s
\end{align*}
as $n \rightarrow \infty$ almost surely.

There are two types of interactions to consider: the pushing interactions between particles of the same type and annihilating interactions between particles of different types.

For all $i \geq 1$ the event that $Y_t^i$ pushes $Y_t^{i+1}$ is a large deviation event for the pair of random walks. The probability of this event decays exponentially in $n$ and by the Borel Cantelli Lemma, for sufficiently large $n$
there are almost surely no pushing interactions in the paths $(Y_{\floor{2nt}})_{t \in [0, 1]}$ and $(Z_{\floor{2nt}})_{t \in [0, 1]}$ restricted to any compact set.  

Consider two nucleation points in M given by $(s, x)$ and $(t, y)$ with $x \leq y$ where the deterministic right
and left path emanating from $(s, x)$ and $(t, y)$ respectively annihilate each other in PNG. 
In PNG these paths meet 
at time $t^* = (y + x + t -s)/2$. 
Let $\tau = \inf\{u \geq s^{[n]} \vee t^{[n]}: R_u \geq L_u\}$
Denote the processes $(R_u^n)_{s^{[n]} \leq u \leq \tau}$ and $(L_u^n)_{t^{[n]} \leq u \leq \tau}$ emanating from the corresponding nucleation points in $M^{[n]}$. 
These can be extended to processes $(R_u^n)_{0 \leq u \leq 1}$ and $(L_u^n)_{0 \leq u \leq 1}$
that are in cemetery states for $0 \leq u < s^{[n]}$, $0 \leq u < t^{[n]}$ and $u > \tau$. 
By a similar large deviation analysis to that used above we can observe that, restricting to compact set, no other particles interact with these processes
for sufficiently large $n$ almost surely. 
Let $R_u = x + u - s$ for $s \leq u < t^*$ and $L_u = y - u + t$ for $t \leq u < t^*$ and otherwise assign these processes to cemetery states.  
By the strong law of large numbers we have that almost surely $\tau \rightarrow (y + x + t -s)/2$
and that
$(R_u^n)_{0 \leq u \leq 1} \rightarrow (R_u)_{0 \leq u \leq 1}$ and 
$(L_u^n)_{0 \leq u \leq 1} \rightarrow (L_u)_{0 \leq u \leq 1}$ almost surely in the Skorokhod topology.

Therefore $(Y_{\floor{2nt}})_{t \in [0, 1]}$ and $(Z_{\floor{2nt}})_{t \in [0, 1]}$ converge in the Skorokhod topology to deterministic lines emanating from the limit of nucleation points and spreading at rate $1$ to the left and right respectively. 
Recall that for a fixed time the state space of the particle systems is the set of locally finite point measures with the topology of vague convergence. 
The height function $(h^{v_n}_{\floor{2nt}})_{t \in [0, 1]}$ is a function of the point process $M^{[n]}$ and the paths of
$(Y_{\floor{2nt}})_{t \in [0, 1]}$ and $(Z_{\floor{2nt}})_{t \in [0, 1]}$. This function has discontinuities (in the stated topologies) only when a point in $M^{[n]}$ 
coincides with a path in $(Y_{\floor{2nt}})_{t \in [0, 1]}$ and $(Z_{\floor{2nt}})_{t \in [0, 1]}$. This is given zero measure by PNG. 
The stated convergence follows from the continuous mapping theorem. 
\end{proof}

\section{Identities with point-to-line last passage percolation}
\label{sec:lpp_identities}

The growth process defined in Section \ref{growth_defn} appears at first sight to be difficult to study due to 
the fact that time is being reversed at each level. However, it can be embedded within a larger array of particles 
experiencing pushing and blocking interactions. The interactions are more complicated but the larger array is given by a
continuous-time Markov process with no need to alternately reverse time.
This process was constructed in \cite{F} and is closely related to a Brownian version in \cite{FW}. 

Let $\mathbf{e}_{ij}$ denote the vector with $1$ in the $(i, j)$-th position that is $0$ otherwise. Let $S = \{(i, j):  i, j \in \mathbb{Z}_{\geq 1}, i + j \leq 2n+1\}$ and let
\begin{equation*}
\mathcal{X} = \{ (x_{ij})_{(i, j ) \in S} : x_{ij} \in \mathbb{Z}_{\geq 0}, x_{i+1, j} \leq x_{ij} \text{ and } x_{i, j+1} \leq x_{ij} \}.
\end{equation*} 
We define a continuous-time Markov process $(X_{ij}(u) : i+j \leq 2n+1, u \in \mathbb{R})$ taking values in $\mathcal{X}$
with the following transition rates.

Suppose that $k \leq j$ and $(i, j), (i, j+1), \ldots, (i, k) \in S$. 
For $\mathbf{x}, \mathbf{x} + \mathbf{e}_{ij} +  \mathbf{e}_{i j-1} + \ldots + \mathbf{e}_{ik} \in \mathcal{X}$ and $x_{ij} = x_{i j-1} = \ldots = x_{ik}$ define
\begin{align*}
 q(\mathbf{x}, \mathbf{x} + \mathbf{e}_{ij} + \mathbf{e}_{i j-1} + \ldots + \mathbf{e}_{ik})  =  v^{1_{\{x_{ij} < x_{i-1, j+1}\}}- 1_{\{x_{ij} \geq x_{i-1, j+1}\}}} 
\end{align*}
We use the notation that $x_{0, j} = \infty$ for $j = 2, \ldots, 2n+1$.
Suppose that $l \geq i$ and $(i, j), (i+1, j), \ldots, (l, j) \in S$. 
For 
$\mathbf{x}, \mathbf{x} - \mathbf{e}_{ij} -  \mathbf{e}_{i+1 j} - \ldots - \mathbf{e}_{lj} \in \mathcal{X}$
and $x_{ij} = x_{i+1 j} = \ldots = x_{lj}$
define
\begin{align*}
 q(\mathbf{x}, \mathbf{x} - \mathbf{e}_{ij} -  \mathbf{e}_{i+1 j} - \ldots - \mathbf{e}_{lj})
  = 
v^{1_{\{x_{ij} > x_{i - 1, j+1}\}} - 1_{\{x_{ij} \leq x_{i - 1, j+1}\}}}.
\end{align*}
The transition rates describe each co-ordinate evolving as independent continuous-time random walks
with some one-sided interactions involving pushing and blocking that constrain the Markov process to remain within $\mathcal{X}$. 
Figure \ref{fig_Xarray} displays the one-sided interactions; the particle at the head of the arrow is affected by the particle at the base of the arrow.  
\begin{itemize}
\item $A \rightarrow B$ corresponds to pushing, if $A = B$ and $A$ increases by one then $B$ also increases by one, 
and blocking, if $A = B$ then $B$ cannot decrease. 
\item $\overset{A}{\underset{B}{\downarrow}}$ corresponds to pushing, if $A = B$ and $A$ decreases by one then $B$ also decreases by one, and blocking, if $A = B$ then $B$ cannot increase. 
 \item $A \leadsto B$ means that the transition rates experienced by $B$ depend on its location relative to $A$. 
 \end{itemize}
Note that the pushing interactions mean that a single jump may propagate to several particles. 
 The particles remain within $\mathbb{Z}_{\geq 0}$ which is depicted by the diagonal 
 line on the left side of Figure \ref{fig_Xarray}.
We will show that the stationary distribution is given by the probability mass function of $(G(i, j):i+j \leq 2n+1)$ 
given by
\[
\pi(x) = (1-v^2)^{n(2n+1)}v^{2\sum_{i+j < 2n+1} (x_{ij}-\max(x_{i+1, j}, x_{i, j+1})) + 2\sum_{i=1}^{2n} x_{i, 2n-i+1}}.
\]
In general, the transition rates of time reversals of Markov processes are complicated. However, in this particular case 
they are closely related to the transition rates in forwards time; all of the interactions between pairs of particles in Figure \ref{fig_Xarray} reverse their direction.  
Suppose that $k \leq i$ and $(i, j), (i, j-1), (i, k) \in S$. 
For $\mathbf{x}, \mathbf{x} + \mathbf{e}_{ij} + \mathbf{e}_{i j -1} + \ldots + \mathbf{e}_{ik} \in \mathcal{X}$ 
and $x_{ij} = x_{i j - 1} = \ldots = x_{ik}$ define
\begin{equation*}
\hat{q}( \mathbf{x} + \mathbf{e}_{ij} + \mathbf{e}_{i j -1} + \ldots + \mathbf{e}_{ik}, \mathbf{x}) = 
 v^{1_{\{ x_{ik} \geq x_{i+1, k-1}\}} - 1_{\{ x_{ik} < x_{i+1, k-1}\}}}.
\end{equation*} 
We use the notation that $x_{j0} = \infty$ for $j = 2, \ldots, 2n+1$.
Suppose $l \geq i$ and $(i, j), (i+1, j), \ldots, (l, j) \in S$.
For $\mathbf{x}, \mathbf{x} -\mathbf{e}_{ij} - \mathbf{e}_{i+1, j} - \ldots - \mathbf{e}_{lj}$
and $x_{ij} = x_{i+1 j} = \ldots = x_{lj}$ define 
\begin{equation*}
\hat{q}(\mathbf{x} -\mathbf{e}_{ij} - \mathbf{e}_{i+1, j} - \ldots - \mathbf{e}_{lj}, \mathbf{x}) = 
 v^{1_{\{x_{lj} \leq x_{l+1, j-1}\}}-1_{\{x_{lj} > x_{l+1, j-1} \}}}.
 \end{equation*}
It was shown in \cite{F} that 
\begin{align*}
\pi(x) q(x, x') &= \pi(x') \hat{q}(x', x), \qquad x, x' \in \mathcal{X} \\
\sum_{x' \neq x} q(x, x') & = \sum_{x' \neq x} \hat{q}(x, x') \qquad x \in \mathcal{X}. 
\end{align*}
Therefore $\pi$ is the stationary distribution and $\hat{q}$ are the transition rates in reversed time 
when the process is run in stationarity. 
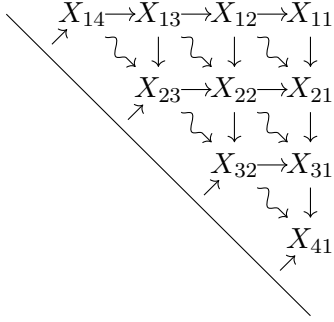
\begin{figure}
\centering
 \begin{tikzpicture}
 \node at (4, 4) {$X_{11}$};
 \draw[->] (3.3, 4) -- (3.7, 4);
  \draw[->] (4, 3.7) -- (4, 3.3);
 \node at (3, 4) {$X_{12}$};
 \draw[->] (2.3, 4) -- (2.7, 4);
   \draw[->] (4, 2.7) -- (4, 2.3);
 \node at (2, 4) {$X_{13}$};
 \draw[->] (1.3, 4) -- (1.7, 4);
   \draw[->] (4, 1.7) -- (4, 1.3);
 \node at (1, 4) {$X_{14}$};
 \draw[->] (3.3, 3) -- (3.7, 3);
   \draw[->] (3, 3.7) -- (3, 3.3);
 \node at (4, 3) {$X_{21}$};
 \draw[->] (2.3, 3) -- (2.7, 3);
   \draw[->] (3, 2.7) -- (3, 2.3);
 \node at (3, 3) {$X_{22}$};
 \draw[->] (3.3, 2) -- (3.7, 2);
 \node at (2, 3) {$X_{23}$};
   \draw[->] (2, 3.7) -- (2, 3.3);
 \node at (4, 2) {$X_{31}$};
 \node at (3, 2) {$X_{32}$};
 \node at (4, 1) {$X_{41}$};
 \draw (0, 4) -- (4, 0);
 \path[draw = black, ->, snake it]    (3.3, 1.7) -- (3.7,1.3);
 \path[draw = black, ->, snake it]    (2.3, 2.7) --  (2.7,2.3);
 \path[draw = black, ->, snake it]    (1.3,3.7) -- (1.7, 3.3);
 \path[draw = black, ->, snake it]    (3.3,2.7) -- (3.7, 2.3);
 \path[draw = black, ->, snake it]    (3.3,3.7) -- (3.7, 3.3);
 \path[draw = black, ->, snake it]    (2.3,3.7) -- (2.7, 3.3);
  \draw[->] (0.6, 3.6) -- (0.8, 3.8);
  \draw[->] (1.6, 2.6) -- (1.8, 2.8);
   \draw[->] (2.6, 1.6) -- (2.8, 1.8);
    \draw[->] (3.6, 0.6) -- (3.8, 0.8);
 \end{tikzpicture}
\caption{The interactions in the system $\{X_{ij} : i + j \leq n+1\}$.}
\label{fig_Xarray}
\end{figure}
This is Theorem 5.2 in \cite{F}, for any $n \geq 1$ the invariant measure of $(X_{ij}(u) : i+j \leq 2n+1, t \geq 0)$ is 
equal in distribution to $(G(i, j) : i+j \leq 2n+1)$.
Furthermore, when run in stationarity, 
\begin{equation*}
(X_{ij}(u))_{u \in \mathbb{R}, i+j \leq 2n+1} \stackrel{d}{=} (X_{ji}(-u))_{u \in \mathbb{R}, i+j \leq 2n+1}. 
\end{equation*}
In our context this property corresponds to spatial symmetry of the growth process in Section \ref{growth_defn}.
The final property that we require from \cite{F} concerns the marginal distributions of rows and columns. 
In particular, the marginal distribution of any row $(X_{i, 2n-i+1}, \ldots, X_{i, 1})$ run forwards in time is PushASEP with a wall at the origin. PushASEP is an interacting particle system introduced in \cite{BF} where each co-ordinate evolves independently according to a continuous-time random walk with rate $v$ of jumping to the right, rate $v^{-1}$ of jumping to the left and one-sided interactions involving pushing and blocking
that preserve the ordering $X_{i, 2n-i+1} \leq \ldots \leq X_{i, 1}.$ With the meaning of arrows above, and an additional wall at the origin, 
the interactions can be depicted as
\[
\vert \rightarrow X_{i, 2n-i+1} \rightarrow \ldots \rightarrow X_{i, 1}.
\]
Moreover, the marginal distribution of a column $(X_{2n-j+1, j}, \ldots, X_{1, j})$ run backwards in time is PushASEP with 
a wall at the origin.

We now show that the growth process in Section \ref{growth_defn} run up to time $2n$, in a stationary regime $L \rightarrow \infty$, can be found 
as a marginal of $(X_{ij}(u) : i+j \leq 2n+1, u \in \mathbb{R})$.
\begin{proposition}
\label{prop:height_array}
When $X$ is run in stationarity
\begin{align*}
& (X_{n+1, n}(u), X_{n, n}(u), X_{n, n-1}(u), \ldots, X_{11}(u))_{u \in \mathbb{R}} \\
& \stackrel{d}{=} (h_{1}(u), h_{2}(u), \ldots, h_{2n}(u))_{u \in \mathbb{R}}.
\end{align*}
\end{proposition}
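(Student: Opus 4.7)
I would label the anti-diagonal entries $A_k$ by $A_1 = X_{n+1,n}$, $A_2 = X_{n,n}$, $A_3 = X_{n,n-1}$, $\ldots$, $A_{2n} = X_{1,1}$, so that the staircase $((i_k,j_k))_{k=1}^{2n}$ alternates between column-moves (decrementing $i$, at even $k$) and row-moves (decrementing $j$, at odd $k \geq 3$). The strategy is induction on $k$ to show that $(A_1(u),\ldots,A_k(u))_{u \in \mathbb{R}} \stackrel{d}{=} (h_1(u),\ldots,h_k(u))_{u \in \mathbb{R}}$, using the row/column PushASEP marginal facts recalled from \cite{F}: rows in forward time and columns in backward time are one-sided PushASEP systems with a wall at the origin, and each single-particle marginal is a reversible reflected random walk on $\mathbb{Z}_{\geq 0}$ with rates $v$ up and $v^{-1}$ down.

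\textbf{Base case and inductive step.} Row $n+1$ of $X$ contains only $X_{n+1,n}$, which by the row marginal is a one-particle PushASEP with a wall, i.e.~a random walk on $\mathbb{Z}_{\geq 0}$ with rates $v$ up and $v^{-1}$ down reflected at $0$. By reversibility of its geometric stationary law, its stationary law as a process in $u$ agrees with that of $h_1$, which from Section \ref{growth_defn} is precisely such a reflected walk above $h_0 \equiv 0$ (in the $L\to\infty$ limit). Now assume $(A_1,\ldots,A_{k-1}) \stackrel{d}{=} (h_1,\ldots,h_{k-1})$. If $k = 2m$ is even, the pair $(A_{k-1},A_k)$ lies in column $n+1-m$, whose backward-time marginal is PushASEP with a wall and in which $A_k$ is the upper neighbour of $A_{k-1}$. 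The one-sided PushASEP dynamics then yield that, conditionally on the path of $A_{k-1}$, the process $s \mapsto A_k(-s)$ is an autonomous random walk with rates $v$ up and $v^{-1}$ down subject only to pushing and blocking from $A_{k-1}$; read back in the variable $u$, this is exactly the prescription defining $h_k$ from $h_{k-1}$ at even $t$. For odd $k \geq 3$, the symmetric argument using the row $n+1-m$ forward-time PushASEP marginal identifies $A_k$ as a walk pushed above $A_{k-1}$ in the forward variable $u$, matching the construction of $h_k$ from $h_{k-1}$ at odd $t$.

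\textbf{Main obstacle.} The subtle point is that the row/column marginals only describe how $A_k$ depends on its single PushASEP predecessor $A_{k-1}$, whereas the induction requires the conditional law of $A_k$ given the whole history $(A_1,\ldots,A_{k-1})$ to depend only on $A_{k-1}$. I would establish this Markov property by exhibiting an explicit coupling of $X$ from independent driving random walks, one per site, and exploiting the one-sided nature of the interactions in Figure \ref{fig_Xarray}: the driving walk producing $A_k$ via PushASEP dynamics against $A_{k-1}$ can be taken independent of everything used to build $A_1,\ldots,A_{k-2}$, because along the zigzag anti-diagonal the interactions propagate from $A_{k-1}$ to $A_k$ without back-reaction from sites outside the relevant row or column. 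Verifying this conditional independence is the technical heart of the argument; once in hand, chaining the matched one-step conditional laws produces the desired equality of finite-dimensional distributions, and by standard extension the equality of process laws in the statement.
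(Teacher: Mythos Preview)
Your overall architecture matches the paper's proof: label the zigzag $A_1,\ldots,A_{2n}$, establish that it is Markov in the index $k$, and identify each one-step conditional law via the row (forward-time) or column (backward-time) PushASEP marginal. The base case and the identification of the transition laws with the push--block construction of $h_t$ from $h_{t-1}$ are exactly as in the paper.

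Where your sketch diverges is in the resolution of what you correctly flag as the main obstacle. Your proposed coupling argument --- taking the driving walk for $A_k$ independent of the noise used for $A_1,\ldots,A_{k-2}$ --- is not immediate, because in the full array $X$ the particle $A_k$ participates in the diagonal $\leadsto$ rate-dependence interactions with sites outside the row/column containing $(A_{k-1},A_k)$; the PushASEP description is only a \emph{marginal} statement, not a pathwise decoupling. The paper bypasses this by a two-stage conditional independence argument: using the one-sidedness of interactions in reversed time, it first shows that $(A_1,\ldots,A_{k-1})$ is conditionally independent of $A_k$ given the \emph{entire} column (resp.\ row) containing $A_{k-1}$ and $A_k$, and then uses the one-sidedness again to reduce conditioning on the whole column to conditioning on the single neighbour $A_{k-1}$. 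Conditioning on the intermediate sigma-algebra of the full column/row is the idea you are missing; once you have it, the PushASEP marginal gives the transition law directly, and no explicit coupling construction is needed.
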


\begin{proof}
Consider $(X_{ij} : (i, j) \in S)$ run backwards in time. 
The directions of interactions backwards in time show that
for any $1 \leq k \leq n$,
 \[(X_{n +1, n}, X_{n, n}, X_{n, n -1}, \ldots, X_{kk})\]
is conditionally independent of $X_{k-1, k-1}$
given \[
(X_{k, k-1}, X_{k+1, k-1}, \ldots, X_{2n-k+2, k-1}).
\]
It also follows from the direction of interactions backwards in time that
$X_{k-1, k-1}$ is conditionally independent of $(X_{k+1, k-1}, \ldots, X_{2n-k+2, k-1})$ given $X_{k, k-1}$. 
This can be combined to show that 
for any $1 \leq k \leq n$, 
\[(X_{n +1, n}, X_{n, n}, X_{n, n -1}, \ldots, X_{kk})\]
is conditionally independent of $X_{k-1, k-1}$ given $X_{k, k-1}$.
%\[(X_{k-1, k-1}, X_{k-1 k-2}, X_{k-2 k-2} \ldots, X_{11})\]
%given $X_{k, k-1}$. 
%To check this first consider $X$ run backwards in time 
%when the marginal distribution of 
%$X_{n-k+2, k-1}, \dots, X_{k k-1}, X_{k-1 k-1}, \ldots, X_{k1}$ is PushASEP with a wall at the origin. This means $X_{k-1 k-1}$ is conditionally independent of $X_{n-k+2, k-1}, \dots, X_{k+1 k-1}$ given $X_{k k-1}$.
%We note that \[(X_{n/2 +1, n/2}, X_{n/2, n/2}, X_{n/2, n/2 -1} \ldots, X_{kk})\] only depends on 
%$X_{n-k+2, k-1}, \dots, X_{k k-1}$ due to the directions of interactions when the array is run backwards in time. 
%This gives the stated property. 

A similar argument, using the directions of interactions forwards in time, shows that  
$X_{k, k-1}$ is conditionally independent of 
\[(X_{n+1, n}, X_{n, n}, X_{n, n-1} \ldots, X_{k+1, k})\] given $X_{k, k}$.
Hence let $\mathscr{H}_0 \equiv 0$ and 
\begin{align*}
\mathscr{H}_{2j+1} & = X_{n + 1-j, n - j}, \qquad j = 0, \ldots, n-1 \\
\mathscr{H}_{2j} & = X_{n + 1-j, n +1- j}, \qquad j = 1, \ldots, n.
\end{align*}
Then $(\mathscr{H}_j)_{j = 0}^{2n}$ is a Markov process and by taking modifications its state space can be taken to be $\UC$ as in Section \ref{growth_defn}. 

The distribution of 
$X_{k, k-1}$ given $X_{k, k}$ follows from the fact that the marginal distribution of 
\[
(X_{k, 2n-k+1}, \dots, X_{k, k}, X_{k, k-1}, \ldots, X_{k, 1})
\]
is PushASEP with a wall at the origin. 
This coincides with the dynamics of $h^{(L)}$ given in Section \ref{growth_defn} except that it is has been defined directly in a stationary regime.
The distribution of 
$X_{k-1, k-1}$ given $X_{k, k-1}$ follows in a similar manner and again coincides with the definition of $h^{(L)}$ in a stationary regime. As $L \rightarrow \infty$, $h^{(L)}$ converges to $h$ which satisfies the stated identity. 
\end{proof}

\begin{proof}[Proof of Theorem \ref{main_thm} part (ii)]
This is a consequence of Propostion \ref{prop:height_array} and 
that the invariant measure of $(X_{ij}(u) : i+j \leq 2n+1, u \geq 0)$ is 
equal in distribution to $(G(i, j) : i+j \leq 2n+1)$.
\end{proof}

Theorem \ref{main_thm} part (i) can be extended to limits of different marginals of $(X_{ij}(u))_{u \in \mathbb{R}, i+j \leq 2n+1}$ as $v_n = 1/n$ and $n \rightarrow \infty$. 
A particularly interesting case is the top row that has the marginal distribution of PushASEP with a wall. This satisfies
\begin{equation}
\label{push_asep_limit}
(X_{1, 2n+1-\floor{2nt}}^{v_n})_{t \in [0, 1]} \rightarrow (\hPNG_t(\cdot -t))_{t \in [0, 1]}
\end{equation} 
in the sense of weak convergence on $\D$ as 
$n \rightarrow \infty$. The proof of this follows the proof of Theorem \ref{main_thm} part (i) very closely. 
The difference is that time is not being reversed at alternative levels so the points of increase of the interface remain fixed
while the points of decrease move during every time step. In the limit, this corresponds to the left boundaries of islands in flat PNG remaining fixed while the right boundaries move to the right at speed 2. This can be viewed as flat PNG in a moving frame of reference. 

A final question is to take a limit $v_n = 1/n, n \rightarrow \infty$ of the whole array $(X_{ij}(u))_{u \in \mathbb{R}, i+j \leq 2n+1}$. 
 Comparing Theorem \ref{main_thm} part (i) and Equation \eqref{push_asep_limit} 
shows that two particular marginals converge to PNG in different moving frames of reference. My belief is that the whole array converges to PNG, simultaneously viewed in different frames of references. This is omitted since the limit is degenerate.

\bibliographystyle{abbrv}
 \bibliography{growth_model}

\end{document}